\newcommand{\n}[1]{\|#1\|}
\newtheorem{thm}{Theorem}[section]   
\newtheorem{prop}[thm]{Proposition}
\newtheorem{lem}[thm]{Lemma}
\newtheorem{Def}[thm]{Definition}
\theoremstyle{definition}
\newcommand{\el}{\ell}
\newcommand{\ra}{\;\rightarrow\;}
\newcommand{\al}{\alpha}
\newcommand{\bi}{\beta}
\newcommand{\Ga} {{\varGamma}}
\newcommand{\de}{\delta }
\newcommand{\OO} {{\varOmega}}
\newcommand{\De} {{\varDelta}}
\newcommand{\e}{\varepsilon }
\newcommand{\f}{\varphi}
\newcommand{\Fi}{\varPhi}
\newcommand{\zi}{\zeta }
\newcommand{\thi}{\theta }
\newcommand{\C}{\mathbb{C}}
\newcommand{\T}{\mathbb{T}}
\newcommand{\R}{\mathbb{R}}
\newcommand{\Q}{\mathbb{Q}}
\newcommand{\ld}{\ldots}
\newcommand{\sm}{\smallsetminus}
 \newcommand{\Arg}{\mbox{Arg}}
 \renewcommand{\Im}{\mbox{Im}}
 \newcommand{\dist}{\mbox{dist}}
\begin{document}
%
\title[total unboundedness]{ Generic non-extendability and total unboundedness in  function spaces}
%
%
\author{V. Nestoridis}
\address{National and Kapodistrian University of Athens,
Department of Mathematics,
Panepistemiopolis, 15784,
Athens,
Greece}
\email{vnestor@math.uoa.gr}

\author{A. G. Siskakis}
\address{Department of Mathematics,
University of Thessaloniki, 54124
Thessaloniki, Greece}
\email{siskakis@math.auth.gr}

\author{A. Stavrianidi}
\address{National and Kapodistrian University of Athens,
Department of Mathematics,
Panepistemiopolis, 15784,
Athens,
Greece}
\email{aleksandrastavrianidi@yahoo.gr}

\author{S. Vlachos}
\address{National and Kapodistrian University of Athens,
Department of Mathematics,
Panepistemiopolis, 15784,
Athens,
Greece}
\email{steliosgerrard8@hotmail.gr}

%
\begin{abstract}
For a function space $X(\OO)$ satisfying weak assumptions we prove that the generic function in $X(\OO)$ is totally unbounded, hence non-extendable. We  provide several examples of such spaces; they are mainly localized versions of classical function spaces and intersections  of them.
\end{abstract}

\subjclass[2010]{32D05, 54E52.}

\keywords{extendability, total unboundedness, generic property, function space, localization, Baire's theorem}

\maketitle

\section{Introduction}\label{sec1}

\noindent

In \cite{5} a function space $X=X(\OO)$ satisfying weak assumptions was considered where $\OO$ is a domain in $\C^d$ and $X(\OO)$ consists  of holomorphic functions on $\OO$. For these spaces $X$ it was proved that the set of non-extendable functions in $X$ is either empty or a $G_\de$ and dense subset of $X$. In the present paper we consider a stronger notion than non-extendability; this is the total unboundedness of a function $f\in X$ or of one of its derivatives. A function $f:\OO\ra\C$ is called totally unbounded if for every ball $B(\zi,\e)$ with radius $\e>0$ centered at $\zi\in\partial\OO$ and every component $Y$ of $\OO\cap B(\zi,\e)$ the restriction $f|_Y$ is unbounded (\cite{2}). If $f$ or one of its derivatives is totally unbounded then $f$ is non-extendable. The converse is not true since there exist non-extendable functions in $A^\infty$, the space of holomorphic  functions on the unit disc $D$ whose derivatives of every order extend continuously on the closed disc, see \cite{6}.

We prove that if $X$ satisfies some weak assumptions, then the set of totally unbounded functions in $X$ is a $G_\de$ and dense subset of $X$ if and only if
 it contains sufficiently many "locally unbounded" functions, i.e. if the following holds:

For every ball $B(\zi,\e)$ with radius $\e>0$ centered at $\zi\in\partial\OO$ and every component $Y$ of $\OO\cap B(\zi,\e)$, there exists a function $h_Y$ in $X$ such that its restriction $h_Y|_Y$ is unbounded.

We also obtain a version of the previous fact for the derivatives of functions in $X$.

The assumption for the space $X=X(\OO)$ is that it is a topological vector space  under the usual operations of addition and scalar multiplication, whose  topology is induced by a complete metric such that  convergence in $X$ implies uniform convergence on compact subsets  of $\OO$. The related description and the main result is in section \ref{sec2}.

In the rest of the article  we give examples of function spaces that satisfy the required conditions.  Most of these spaces are localized versions of classical function spaces. Very often a classical function space consists of all holomorphic in $\OO$ functions satisfying a property $P$ when we approach from $\OO$ the whole boundary $\partial\OO$. For the localization we consider a denumerable set of properties $P_i$, $i\in I$ and we demand that each one of them holds when we approach from $\OO$ only a part $J_i$, $i\in I$ of the boundary $\partial\OO$. These spaces endowed with their natural topology are often Fr\'{e}ch\'{e}t spaces and satisfy our requirements. In order to assure the existence of the functions $h_Y$ in $X$ we impose some geometrical or topological assumptions on $\OO$. One such assumption is that for some $\al\in\partial\OO$ there exists $\bi$ so that the segment $(\al,\bi]$ is disjoint from $\overline{\OO}$ or $\OO$. The essential point is that $\arg\frac{z-\al}{z-\bi}$ has a bounded on $\OO$ branch. Thus, the segment $(\al,\bi]$ can be replaced by other curves not turning around very much.

In the last section we give an example of a space $X$ which is not a localization of a classical function space, but is itself a classical function space. This is the space of holomorphic functions $f(z)=\sum^\infty_{n=0}a_nz^n$ on the open unit disc $D$ such that $\sum^\infty_{n=0}|a_n|^p<\infty$. We prove that for the generic function $f$ in these spaces its derivative $f'$ is totally unbounded and $f$ is non-extendable.

In the future we will consider localizations of the Nevanlinna class or of subclasses of it, as well as, of the Dirichlet space. We mention that the localized Hardy space $H^1$ appears naturally with the results of \cite{3}.
More precisely if $\OO$ is a Jordan domain in $\C$ and $\Fi:D\ra\OO$ is a Riemann map and for a closed arc $J=\{e^{i\thi}:A\le\thi\le B\}\subset \T=\partial D$ with $A<B\le A+2\pi$ the curve $\Fi(J)$ has finite length, then
$
\sup_{0\le r<1}\int^\bi_\al|\Fi'(re^{i\thi})|d\thi<\infty
$
for all $\al,\bi$ such that $A<\al<\bi<B$; that is, the derivative $\Fi'$ belongs to the localized Hardy space $H^1(D,\{e^{i\thi}:\al\le\thi\le\bi\})$.

In order to obtain an ``if and only if'' statement is suffices to consider the localized Hardy space $H^1(D,O)$, where $O\subset \T$ is relatively open. Then $\Fi'\in H^1(D,O)$ if and only if the curve $\Fi(\{e^{i\thi}:a\le\thi\le\bi\})$ has finite length for all $\al,\bi$ such that $\{e^{i\thi}:a\le\thi\le\bi\}\subset O$.
For the definitions of the localized Hardy spaces see section \ref{sec6} below.

Finally it is interesting to investigate properties of the new localized spaces or of the functions belonging to them. What about their zeros or their growth; are there non-tangential limits? Are also polynomials or rational functions dense in these spaces? We also mention that some of the facts established in this paper hold if $X=X(\OO)$ is not a space of holomorphic functions but also a space of harmonic functions on a domain $\OO$ in $\R^d$.
\section{Extendability and total unboundedness}\label{sec2}
\noindent

In this section we present some facts about non extendable and totally unbounded functions that will be central in the paper.
\begin{Def}\label{Def2.1}
Let $\OO\subset\C^d$ be a domain and $f$ a holomorphic function in $\OO$. The function $f$ is called extendable if there exist a domain $U\subset\C^d$, $\partial\OO\cap U\neq\emptyset$ and a holomorphic function $F$ in $U$ such that for some connected component $Y$ of $U\cap\OO$ we have $f|_Y=F|_Y$. Otherwise $f$ is called non-extendable.
\end{Def}

We will also need the following fact

\begin{lem}\label{lem2.2}
(\cite{2}, \cite{5}). Let $\OO\subset\C^d$ be a domain and $U\subset\C^d$ a domain such that $U\cap\partial\OO\neq\emptyset$. Let $Y$ denote a connected component of $U\cap\OO$. Then $\overline{Y}\cap U\cap\partial\OO\neq\emptyset$.
\end{lem}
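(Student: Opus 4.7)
The plan is to prove the lemma by analysing the boundary of $Y$ inside $U$ and invoking the connectedness of $U$. Since $\C^d$ is locally connected and $U\cap\OO$ is open, the connected component $Y$ is open, and in particular $Y\subset\OO$.

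First I would show the inclusion
\[
\partial Y\cap U\;\subset\;U\cap\partial\OO,
\]
where $\partial Y$ denotes the topological boundary in $\C^d$. Pick $z\in\partial Y\cap U$. Since $Y\subset\OO$, the point $z$ lies in $\overline{\OO}$, so either $z\in\partial\OO$ (which is what we want) or $z\in\OO$. In the latter case $z\in U\cap\OO$, and because $U\cap\OO$ is open there is an open connected neighbourhood $V$ of $z$ contained in $U\cap\OO$. As $z\in\partial Y$, the set $V$ meets $Y$. But $V$ is connected and $Y$ is a connected component of $U\cap\OO$, hence $V\subset Y$. This forces $z\in Y$, contradicting $z\in\partial Y$ (recall $Y$ is open). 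Thus $z\in\partial\OO$, proving the inclusion.

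Next I would argue that $\partial Y\cap U\neq\emptyset$. Suppose, for contradiction, that $\partial Y\cap U=\emptyset$. Then $Y$ is both open and closed in the subspace topology of $U$: openness is clear, and closedness follows because the closure of $Y$ in $U$ equals $(Y\cup\partial Y)\cap U=Y$. Since $U$ is connected and $Y\neq\emptyset$, this forces $Y=U$, so $U\subset\OO$. But $\OO$ is open, hence $U\cap\partial\OO=\emptyset$, contradicting the hypothesis.

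Combining the two steps, we pick any $z\in\partial Y\cap U$; by the first step $z\in U\cap\partial\OO$, and trivially $z\in\partial Y\subset\overline{Y}$, so $z\in\overline{Y}\cap U\cap\partial\OO$, which is therefore nonempty. The only delicate point I anticipate is the verification that a boundary point of $Y$ lying in $U\cap\OO$ must itself belong to $Y$; this is where local connectedness of $\C^d$ (applied to the open set $U\cap\OO$) is used, and I would state it explicitly rather than leave it implicit.
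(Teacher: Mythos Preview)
Your argument is correct: the inclusion $\partial Y\cap U\subset U\cap\partial\OO$ is established cleanly using that components of the open set $U\cap\OO$ are open (local connectedness of $\C^d$), and the nonemptiness of $\partial Y\cap U$ follows from the connectedness of $U$ exactly as you describe. One small point worth making explicit is that $Y\neq\emptyset$; this is implicit in the phrase ``connected component'', but since it is used to conclude $Y=U$ in the contradiction step you might state it.

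As for comparison with the paper: the paper does not prove Lemma~\ref{lem2.2} at all but merely quotes it from the references \cite{2} and \cite{5}. Your self-contained proof via the clopen-in-$U$ argument is a standard and efficient way to obtain the result, and nothing in the paper conflicts with it.
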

\begin{prop}\label{prop2.3}
(\cite{5}). Let $\OO\subset\C^d$ be a domain and $f:\OO\ra\C$ a holomorphic function. Then the following are equivalent.

(i) $f$ is extendable

(ii) There exist two balls $b,B$ in $\C^d$ such that $\overline{b}\subset B\cap\OO$, $B\cap\OO^c\neq\emptyset$ and a bounded holomorphic function $F:B\ra\C$ such that $F|_b=f|_b$.
\end{prop}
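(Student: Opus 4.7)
The plan is to prove the two implications separately, treating (ii)$\Rightarrow$(i) as essentially immediate and (i)$\Rightarrow$(ii) as the substantive direction that needs Lemma \ref{lem2.2}.

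For (ii)$\Rightarrow$(i), I would simply take $U=B$ and the given bounded holomorphic $F$. The ball $B$ is a domain in $\C^d$. Since $\overline{b}\subset B\cap\OO$ and $B\cap\OO^c\neq\emptyset$, the connectedness of $B$ forces $B\cap\partial\OO\neq\emptyset$ (otherwise $B$ would be split by the open sets $\OO$ and $\mathrm{int}(\OO^c)$). Let $Y$ be the connected component of $B\cap\OO$ that contains the open set $b$. On $b$ we have $F|_b=f|_b$, and both functions are holomorphic on the connected set $Y$, so by the identity principle $F|_Y=f|_Y$. This matches Definition \ref{Def2.1} and gives extendability.

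For the harder direction (i)$\Rightarrow$(ii), suppose $f$ is extended by some holomorphic $F$ on a domain $U$ with $U\cap\partial\OO\neq\emptyset$, agreeing with $f$ on some component $Y$ of $U\cap\OO$. The first step is to apply Lemma \ref{lem2.2} to produce a point
\[
\zi\in\overline{Y}\cap U\cap\partial\OO.
\]
Since $U$ is open and $\zi\in U$, I can pick $R>0$ with $\overline{B(\zi,R)}\subset U$. Set $B:=B(\zi,R)$. Then $F$ is holomorphic on $U$ and continuous on the compact set $\overline{B}$, so $F$ is bounded on $B$. Because $\zi\in\partial\OO$, every neighborhood of $\zi$ meets $\OO^c$, so $B\cap\OO^c\neq\emptyset$.

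It remains to produce the small ball $b$. Since $\zi\in\overline{Y}$ and $B$ is a neighborhood of $\zi$, I can choose a point $w\in Y\cap B$. Because $Y\cap B$ is open, there is $\ro>0$ with $\overline{b}\subset Y\cap B$ for $b:=B(w,\ro)$. This gives $\overline{b}\subset B\cap\OO$ (using $Y\subset\OO$), and since $b\subset Y$ the relation $f|_Y=F|_Y$ yields $F|_b=f|_b$. Thus all the requirements in (ii) are satisfied.

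I do not expect a serious obstacle here; the proposition is essentially a packaging statement that turns the abstract extendability from Definition \ref{Def2.1} into a concrete two-ball configuration. The only point that needs a moment's care is the application of the identity theorem in the (ii)$\Rightarrow$(i) direction in $\C^d$ (which is standard on connected open sets), and the use of Lemma \ref{lem2.2} in (i)$\Rightarrow$(ii), which already does the real topological work of guaranteeing that the component $Y$ reaches the boundary inside $U$.
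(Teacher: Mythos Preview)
Your argument is correct and is exactly the standard proof. Note that the paper itself does not prove Proposition~\ref{prop2.3}; it merely states it with a citation to \cite{5}, so there is no in-paper proof to compare against, but your use of Lemma~\ref{lem2.2} to locate a boundary point $\zi\in\overline{Y}\cap U\cap\partial\OO$ and then shrink to a compactly contained ball is the intended route.
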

\begin{prop}\label{prop2.4}
Let $\OO\subset\C^d$ be a domain and $f:\OO\ra\C$ a holomorphic function. Let $T$ denote the identity or an operator of mixed partial derivation.
If $Tf$ is non-extendable, then $f$ is non-extendable as well.
\end{prop}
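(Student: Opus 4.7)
The plan is to prove the contrapositive: assume $f$ is extendable and produce an extension of $Tf$. Invoking Proposition 2.3, the extendability of $f$ supplies balls $b, B \subset \C^d$ with $\overline{b} \subset B \cap \OO$, $B \cap \OO^c \neq \emptyset$, and a bounded holomorphic function $F : B \to \C$ such that $F|_b = f|_b$.

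Next I would let $Y$ denote the connected component of $B \cap \OO$ containing $b$. By the identity principle on the connected open set $Y$, from $F = f$ on the open subset $b$ I obtain $F = f$ throughout $Y$, and applying the differential operator $T$ (which is either the identity or a mixed holomorphic partial derivative, hence commutes with restriction to open sets and preserves holomorphy) yields $TF = Tf$ on $Y$. Note also that $TF$ is holomorphic on all of $B$.

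Now I would use Lemma 2.2 applied with $U = B$ to pick a point $\zi \in \overline{Y} \cap B \cap \partial \OO$. Choose a small ball $B'$ centered at $\zi$ with $\overline{B'} \subset B$; then $TF$, being continuous on the compact set $\overline{B'}$, is bounded on $B'$, and $B' \cap \OO^c \neq \emptyset$ because $\zi \in \partial \OO$. Since $\zi \in \overline{Y}$, the open set $B' \cap Y$ is nonempty, so I can select a ball $b' \subset \C^d$ with $\overline{b'} \subset B' \cap Y \subset B' \cap \OO$. On $b'$ we have $Tf = TF$ because $b' \subset Y$. Thus the triple $(b', B', TF)$ verifies all conditions of Proposition 2.3 for the function $Tf$, so $Tf$ is extendable, completing the contrapositive.

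The only delicate step is Step 2, where Lemma 2.2 is needed to guarantee that a boundary point of $\OO$ lies in the interior of the larger ball rather than merely on its boundary; without this, shrinking $B$ to a ball $B'$ whose closure stays inside $B$ (needed for boundedness of $TF$) could destroy the condition $B' \cap \OO^c \neq \emptyset$. The rest of the argument is straightforward bookkeeping with the identity principle and the commutativity of $T$ with restriction to open subsets.
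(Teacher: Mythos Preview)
Your argument is correct, but it is considerably more elaborate than the paper's. The paper works directly from Definition~\ref{Def2.1} rather than from Proposition~\ref{prop2.3}: if $f$ is extendable there exist a domain $U$ with $U\cap\partial\OO\neq\emptyset$, a component $Y$ of $U\cap\OO$, and a holomorphic $F$ on $U$ with $F|_Y=f|_Y$; since $Y$ is open one immediately gets $TF|_Y=Tf|_Y$, and the very same triple $(U,Y,TF)$ witnesses the extendability of $Tf$. No boundedness is required in Definition~\ref{Def2.1}, so there is nothing further to check.

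Your detour through Proposition~\ref{prop2.3} forces you to handle the extra boundedness clause: $F$ bounded on $B$ does not give $TF$ bounded on $B$, and this is precisely why you need Lemma~\ref{lem2.2} and the shrinking step to $B'$. That machinery is executed cleanly and the argument goes through, but it buys nothing here; the direct use of the definition avoids the issue entirely. The moral is that when the conclusion you want matches the hypotheses of the definition verbatim, going through an equivalent reformulation with additional side conditions can create work that was never there to begin with.
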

\begin{proof}
If $f$ is extendable, then according to Definition \ref{Def2.1} there exists a domain $U\subset\C^d$, $\partial\OO\cap U=\emptyset$ and a function $F$ holomorphic in $U$ such that $F|_Y=f|_Y$ for some component $Y$ of $U\cap\OO$. Then obviously $TF|_Y=Tf|_Y$ since $Y$ is open. It follows that   $Tf$  is extendable, which contradicts our hypothesis.
\end{proof}
\begin{thm}\label{thm2.5} (\cite{5}). Let $\OO\subset\C^d$ be a domain and $X=X(\OO)$ be a topological vector space consisting of holomorphic functions on $\OO$, whose topology is induced by a complete metric and such that convergence in $X$ implies uniform convergence on compacta of $\OO$. Then the set of non-extendable functions in $X$ is either $\emptyset$ or a dense and $G_\de$ subset of $X$.
\end{thm}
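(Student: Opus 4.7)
The plan is to realize the set $E$ of extendable functions in $X$ as a countable union of closed, nowhere-dense subsets, so that its complement becomes a dense $G_\de$ by Baire's theorem as soon as it is assumed to be non-empty.

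First, by Proposition \ref{prop2.3}, extendability of $f \in X$ is equivalent to the existence of balls $b, B \subset \C^d$ with $\overline{b} \subset B \cap \OO$, $B \cap \OO^c \ne \emptyset$, and a bounded holomorphic $F : B \to \C$ with $F|_b = f|_b$. I would reduce this to a countable enumeration by fixing a sequence $(b_j, B_j)$ of open balls with rational centers and radii satisfying $\overline{b_j} \subset B_j \cap \OO$ and $B_j \cap \OO^c \ne \emptyset$, and then, for each such $j$ and each $n \in \N$, setting
$$E(j,n) = \{\, f \in X : \exists\, F \text{ holomorphic on } B_j,\ \sup_{B_j}|F| \le n,\ F|_{b_j} = f|_{b_j}\,\}.$$
A routine shrinking argument (pass to slightly smaller rational $b,B$ on which the extension stays bounded by some integer $n$) shows $E = \bigcup_{j,n} E(j,n)$.

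The next step is to verify that each $E(j,n)$ is closed in $X$. Given $f_k \in E(j,n)$ with $f_k \to f$ in $X$, the hypothesis on the topology forces uniform convergence on the compact set $\overline{b_j} \subset \OO$, while the associated extensions $F_k$ are uniformly bounded by $n$ on $B_j$. Montel's theorem then yields a subsequence converging locally uniformly on $B_j$ to a holomorphic $F$ with $\sup_{B_j}|F| \le n$ and $F|_{b_j} = f|_{b_j}$, so $f \in E(j,n)$.

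The crux is to show each $E(j,n)$ has empty interior, under the assumption that some non-extendable $h_0 \in X$ exists. Suppose for contradiction that $f_0 + V \subset E(j,n)$ for some $f_0 \in X$ and some open neighborhood $V$ of $0$, and let $F_0$ be an extension witnessing $f_0 \in E(j,n)$. For an arbitrary $h \in X$ I would choose $\e > 0$ small enough that $\e h \in V$; then $f_0 + \e h \in E(j,n)$ yields an extension $G_\e$ on $B_j$ bounded by $n$, and the holomorphic function $(G_\e - F_0)/\e$ is bounded on $B_j$ and restricts to $h$ on $b_j$. Proposition \ref{prop2.3} then forces $h$ to be extendable, contradicting non-extendability of $h_0$. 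Hence each $E(j,n)$ is nowhere dense, and since $X$ is a complete metric space, Baire's theorem gives that $\bigcap_{j,n}(X \setminus E(j,n))$, which coincides with the set of non-extendable functions, is a dense $G_\de$ subset of $X$. The main obstacle is precisely this empty-interior step, which depends on the linearity of analytic extension together with the fact that $F_0$ and $G_\e$ live on the full connected ball $B_j$, not merely on some component of $B_j \cap \OO$.
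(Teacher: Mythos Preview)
The paper does not supply its own proof of Theorem~\ref{thm2.5}; the result is simply quoted from \cite{5}. Your argument is correct and is precisely the standard route used in that reference: reduce extendability via Proposition~\ref{prop2.3} to a countable family of conditions indexed by rational pairs $(b_j,B_j)$ and a bound $n$, show each $E(j,n)$ is closed by Montel, and use linearity to see that if some $E(j,n)$ had interior then \emph{every} $h\in X$ would be extendable. There is nothing to add; the proposal matches the intended (cited) proof essentially verbatim.
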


A slightly stronger notion is that of totally unbounded function (\cite{2}).
\begin{Def}\label{Def2.6}
Let $\OO\subset\C^d$ be a domain. A function  $g:\OO\ra\C$  is called totally unbounded on $\OO$, if for every ball $B(\zi,\e)\subset\C^d$ with radius $\e>0$ and centered at $\zi\in\partial\OO$ and every component $Y$ of $B(\zi,\e)\cap\OO$ the restriction $g|_Y$ is unbounded.
\end{Def}
\begin{prop}\label{prop2.7}
(\cite{2}). Let $\OO\subset\C^d$ be a domain and $g:\OO\ra\C$ a holomorphic function. If $g$ is totally unbounded, then $g$ is non-extendable.
\end{prop}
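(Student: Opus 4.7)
The plan is to argue by contradiction, using Proposition \ref{prop2.3} to turn extendability into the existence of a bounded holomorphic extension on a ball, and then use Lemma \ref{lem2.2} to locate a boundary point where total unboundedness fails.

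First I would assume $g$ is extendable and invoke Proposition \ref{prop2.3} to obtain balls $b, B \subset \C^d$ with $\overline{b} \subset B\cap \OO$, $B\cap \OO^c\neq\emptyset$, and a bounded holomorphic $F:B\to\C$ with $F|_b = g|_b$. Let $Y_0$ denote the connected component of $B\cap\OO$ containing $b$. Since $Y_0$ is connected and $F = g$ on the open set $b\subset Y_0$, the identity theorem for holomorphic functions gives $F = g$ on all of $Y_0$. In particular, $g$ is bounded on $Y_0$ (by $\sup_B |F|$).

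Next I would use connectivity of the ball $B$ together with $B\cap\OO\neq\emptyset$ and $B\cap\OO^c\neq\emptyset$ to conclude that $B\cap\partial\OO\neq\emptyset$. Lemma \ref{lem2.2}, applied with $U = B$ and the component $Y_0$, then yields a point $\zi \in \overline{Y_0}\cap B\cap \partial\OO$. Choose $\e>0$ small enough that $B(\zi,\e)\subset B$. Because $\zi\in\overline{Y_0}$, the intersection $B(\zi,\e)\cap Y_0$ is nonempty, so any point of this intersection lies in some connected component $Y$ of $B(\zi,\e)\cap \OO$. Since $Y$ is connected, contained in $B\cap\OO$, and meets $Y_0$, it must be contained in $Y_0$.

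Finally, from $Y\subset Y_0$ and the boundedness of $g$ on $Y_0$ established above, $g|_Y$ is bounded, which contradicts the total unboundedness of $g$ applied to the boundary ball $B(\zi,\e)$ and its component $Y$. The main conceptual point to get right is that the bounded extension $F$ agrees with $g$ only on the component $Y_0$ of $B\cap\OO$ containing $b$, and not necessarily on other components; the use of Lemma \ref{lem2.2} is precisely to pick the boundary center $\zi$ inside $\overline{Y_0}$ so that a suitably small ball around $\zi$ produces a subcomponent $Y\subset Y_0$ on which the boundedness of $F$ transfers to $g$. Everything else is a routine identity-theorem/connectedness argument.
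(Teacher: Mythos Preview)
Your argument is correct. The paper does not actually supply its own proof of Proposition~\ref{prop2.7}; it simply quotes the result from \cite{2}. So there is no in-paper proof to compare against, but your proposal is a clean and complete argument using exactly the tools the paper has made available: Proposition~\ref{prop2.3} to produce a bounded holomorphic $F$ on a ball $B$ agreeing with $g$ on a small ball $b$, the identity theorem to propagate $F=g$ to the whole component $Y_0$ of $B\cap\OO$ containing $b$, and Lemma~\ref{lem2.2} to find a boundary point $\zi\in\overline{Y_0}\cap B\cap\partial\OO$ around which a small ball yields a component $Y\subset Y_0$ on which $g$ is forced to be bounded. Each step is justified, and your remark about why one must work inside the particular component $Y_0$ (rather than an arbitrary component of $B\cap\OO$) is exactly the point that makes Lemma~\ref{lem2.2} necessary.
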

\begin{thm}\label{thm2.8}
Let $\OO\subset\C^d$ be a domain and $X=X(\OO)$ be a  topological vector space whose topology is induced by a complete metric, such that convergence in $X$ implies uniform convergence on compacta of $\OO$. Let $T$ be the identity or an operator of mixed partial derivation. We assume that for every ball  $B(\zi,\e)\subset\C^d$  with radius $\e>0$ centered at $\zi\in\partial\OO$ and for  every component $Y$ of $\OO\cap B(\zi,\e)$, there exists a function $h_Y$ in $X$ such that its restriction $Th_Y|_Y$ is unbounded. Then there exists a function $h\in X$ such that  $Th$ is totally unbounded on $\OO$. The set of $f\in X$ such that $Tf$ is totally unbounded on $\OO$ is a dense and $G_\de$ subset of $X$. Furthermore, the set of non-extendable functions in $X$ is a dense and $G_\de$ subset of $X$.
\end{thm}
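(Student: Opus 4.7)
The plan is to apply Baire's category theorem to $X$, which is a complete metric space. I will first express ``$Tf$ is totally unbounded'' as the complement of a countable union of closed nowhere dense sets. The difficulty is that Definition \ref{Def2.6} quantifies over uncountably many balls, so I set up a countable family of test pairs $(B_n,Y_n)$ sufficient to detect any failure of total unboundedness. Let $\{\zi_k\}$ be a countable dense subset of $\partial\OO$ and enumerate all pairs $(B(\zi_k,\ro),Y)$ where $\ro$ runs over positive rationals and $Y$ runs over the connected components of $\OO\cap B(\zi_k,\ro)$ (countably many, since $\C^d$ is second countable and such components are open). Using Lemma \ref{lem2.2} I verify the key reduction: if $Tf|_Y$ is bounded on some component $Y$ of some $\OO\cap B(\zi,\e)$ with $\zi\in\partial\OO$, then some enumerated $(B_n,Y_n)$ satisfies $Y_n\subset Y$. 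Indeed, apply Lemma \ref{lem2.2} to pick $\zi'\in\overline Y\cap B(\zi,\e)\cap\partial\OO$, approximate $\zi'$ by some $\zi_k$, choose a rational $\ro$ so that $B(\zi_k,\ro)\subset B(\zi,\e)$ and $B(\zi_k,\ro)$ still contains a point of $Y$, and take $Y_n$ to be the component of $\OO\cap B(\zi_k,\ro)$ through that point.

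For each such $n$ and each $M\in\N$, set
\[
A_{n,M}=\{f\in X:|Tf(z)|\le M\text{ for all }z\in Y_n\}.
\]
Each $A_{n,M}$ is closed in $X$: if $f_j\to f$ in $X$, then $f_j\to f$ uniformly on compacta of $\OO$, so by Weierstrass/Cauchy also $Tf_j\to Tf$ uniformly on compacta, and the pointwise bound $|Tf(z)|\le M$ passes to the limit for each $z\in Y_n$. The main step is that $A_{n,M}$ has empty interior: given $f_0\in A_{n,M}$, use the hypothesis to pick $h=h_{Y_n}\in X$ with $Th|_{Y_n}$ unbounded. Since $T$ is linear, $T(f_0+\la h)=Tf_0+\la Th$, and for every $\la\ne 0$ this is unbounded on $Y_n$ (the bounded $Tf_0|_{Y_n}$ plus an unbounded function is unbounded). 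Thus $f_0+\la h\notin A_{n,M}$, while $f_0+\la h\to f_0$ in $X$ as $\la\to 0$, so $f_0$ is not an interior point. The set $\bigcup_{n,M}A_{n,M}$ is therefore meager, and by its very construction it coincides with $\{f\in X:Tf\text{ is not totally unbounded}\}$. Baire's theorem now yields that the set of $f\in X$ with $Tf$ totally unbounded is a dense $G_\de$; in particular it is nonempty, which supplies the desired $h$.

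For the final assertion, any $f$ with $Tf$ totally unbounded has $Tf$ non-extendable by Proposition \ref{prop2.7}, hence $f$ is non-extendable by Proposition \ref{prop2.4}. Therefore the set of non-extendable functions in $X$ is nonempty, so by Theorem \ref{thm2.5} it is automatically a dense $G_\de$ subset of $X$. The principal obstacle in the whole argument is the first step: carefully selecting the countable family $(B_n,Y_n)$ and invoking Lemma \ref{lem2.2} to ensure that any localized boundedness of $Tf$ on some $(B(\zi,\e),Y)$ is already detected inside one of the enumerated pairs. The remaining ingredients—closedness of $A_{n,M}$, nowhere-density via the $h_{Y_n}$ hypothesis, and the transfer to non-extendability—follow directly from the assumptions and the previously established propositions.
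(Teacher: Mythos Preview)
Your proof is correct and follows essentially the same route as the paper: a countable dense set in $\partial\OO$, balls with rational radii, their components, Lemma \ref{lem2.2} for the reduction to the countable family, Baire's theorem, and then Propositions \ref{prop2.7}, \ref{prop2.4} and Theorem \ref{thm2.5} for the non-extendability part. The only difference is that where the paper invokes an external reference (\cite[Proposition 5.2]{7}) to conclude that each $S_Y=\{f:Tf|_Y\text{ unbounded}\}$ is $G_\de$ and dense, you prove this directly by writing the complement as $\bigcup_M A_{n,M}$, checking closedness via Weierstrass, and showing empty interior via the perturbation $f_0+\la h_{Y_n}$; this is a minor, self-contained improvement rather than a different strategy.
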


\noindent
\begin{proof}
Let  $Z\subset\partial{\OO}$ be a countable set which is dense in $\partial{\OO}$. We write
$$
 \Ga=\{B(z,q):z\in Z, q\in \Q^+\}
 $$
for the collection of all balls with centers in $Z$ and rational radii, a denumerable set.  For  $B(z,q)\in \Ga$ let  $J_{B(z,q)}=\{Y \ \text{connected component of} \ \ B(z, q)\cap\OO\}$ and
$$
J=\bigcup_{B(z,q)\in \Ga}J _{B(z,q)}.
$$
For a subset $Y\subset\OO$ we write $S_Y=\{f\in X:Tf \, \text{is unbounded in} \,  Y\}.$

\smallskip
 \noindent
{\bf Claim}: $Tf$ is totally unbounded if and only if $f\in \bigcap_{Y\in J}S_Y$.\smallskip \\
{\em Proof of the claim}. It is clear that if $Tf$ is totally unbounded then $f\in \cap_{Y\in J}S_Y$.
To prove the converse suppose $B=B(x_0, \e_0)$ is a ball with $x_0\in\partial\OO$ and  $\e_0>0$,  and let $Y_0$ be a connected component of $B(x_0, \e_0)\cap\OO$. According to Lemma \ref{lem2.2} we have $\overline{Y_0}\cap\partial\OO\cap B\neq\emptyset$; let $x_1\in\overline{Y_0}\cap\partial\OO\cap B$, then since $B$ is open, there exists $\e_1>0$ such that $B(x_1, \e_1)\subset B$. Let $q\in \Q^+$ such that $0<q<\e_1$ then  $B(x_1,q)\subset B(x_1,\e_1)\subset B$. Since $x_1\in\partial\OO=\overline{Z}$ there exists $z_1\in Z$ such that $d(x_1,z_1)<\frac{q}{2}$, thus
$$
x_1\in B\Big(z_1,\frac{q}{2}\Big) \subset  B(x_1,q)\subset B.
$$
Setting  $\e_2=\frac{q}{2}-d(x_1,z_1)>0$ we have $B(x_1,\e_2)\subset B(z_1,\frac{q}{2})$. Since $x_1\in\overline{Y_0}$ there exists $x_2\in Y_0\cap B(x_1,\e_2)$. Thus
$$
x_2\in Y_0\cap B\Big(z_1,\frac{q}{2}\Big)\subset\OO\cap B\Big(z_1,\frac{q}{2}\Big).
$$
 Let $A$ be the connected component of $\OO\cap B(z_1,\frac{q}{2})$ containing $x_2$. Then $A\in J$, and since $A\subset\OO\cap B(z_1,\frac{q}{2})\subset\OO\cap B$ and $x_2\in A$, it follows that $A\subseteq Y_0$ as $A$ is connected and $Y_0$ is a connected component.

Now let  $f\in\cap_{Y\in J}S_Y$ then $f\in S_A$ and thus $Tf$ is unbounded on $A$. Since $A\subseteq Y_0$ it follows that   $Tf|_{Y_0}$ is unbounded. This finishes the proof of the claim.

We continue with the proof of Theorem.
If we prove that each $S_Y$, $Y\in J$, is $G_\de$ and dense in the complete metric space $X$, since $J$ is denumerable, then also $\cap_{Y\in J}S_Y$ is $G_\de$ and dense,  according to Baire's theorem. Thus it suffices to prove that each $S_Y$ is $G_\de$ and dense in $X$.

From \cite[Proposition 5.2]{7} we know that $S_Y$ is either empty or $G_\de$ and dense. By assumption there exists $h_Y\in S_Y$. Thus each $S_Y$ is $G_\de$ and dense in $X$.

We have proved that the set of $f\in X$, such that $Tf$ is totally unbounded is a $G_\de$ and dense subset of $X$. Proposition \ref{prop2.7} implies that there exists $h\in X$ such that $Th$ is non-extendable. Proposition \ref{prop2.4} implies that $h\in X$ is non extendable. Finally, Theorem \ref{thm2.5} implies that the set of non-extendable functions in $X$ is $G_\de$ and dense. This completes the proof. \end{proof}
\section{Localization of $H^{\infty}(\OO)$}\label{sec3}
\noindent

In this section we present a first application of Theorem \ref{thm2.8}.

Let $\OO\subset\C$ be a domain and $V\subset\OO$ an open set. We consider the set:
$$
X(\OO,V)=H(\OO)\cap H^\infty(V)=\{f\in H(\OO):f|_V\;\text{is bounded}\},
$$
see  \cite{4}. If $\overline{V}\subset\OO$ and $V$ is bounded, then obviously $X(\OO,V)=H(\OO)$ and the space is endowed with its usual Fr\'{e}ch\'{e}t topology. More generally, independently of the fact that $\overline{V}\subset\OO$ or $\overline{V}\cap\partial\OO\neq\emptyset$ the topology of $X(\OO,V)$ is the Fr\'{e}ch\'{e}t topology induced by the seminorms $\|f|_V\|_\infty$ and $\|f|_{K_m}\|_\infty$, $m=1,2,\ld$ where $\big\{K_m\big\}^\infty_{m=1}$ is an exhaustive sequence of compact subsets of $\OO$.

Obviously $X= X(\OO,V)$ satisfies the requirements of Theorem \ref{thm2.8}. Therefore, in order to prove that the set of functions with totally unbounded derivative is $G_\de$ and dense in this space it suffices to prove that for each ball $B(x_0,\e)$ centered at $x_0\in\partial\OO$ with radius $\e>0$ and $Y$ connected component of $B(x_0,\e)\cap\OO$ there exists an $h_Y\in X(\OO,V)$ such that $h'_{Y}$ is unbounded on $Y$. This requires some extra hypothesis on $\OO$.
\begin{thm}\label{thm3.1}
Let $\OO\subset\C$ be a domain and $V\subset\OO$ an open set such that
for all $ \al\in\overline{V}\cap\partial\OO $ there exists $ \bi\in\OO^c$ such that $[\al,\bi]\subseteq\OO^c$.
Then the set $\{f\in X(\OO,V):f'$ totally unbounded$\}$ is dense and $G_\de$ in $X(\OO,V)$; furthermore, the set of non-extendable functions in $X(\OO,V)$ is a $G_\de$ and dense subset.
\end{thm}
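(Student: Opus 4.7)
The plan is to invoke Theorem~\ref{thm2.8} with $T$ the derivative operator $d/dz$. The space $X(\OO,V)$ is Fr\'echet under the seminorms $\|f|_V\|_\infty$ and $\|f|_{K_m}\|_\infty$, hence complete metrizable, and convergence in $X(\OO,V)$ implies uniform convergence on each $K_m$ and therefore on compacta of $\OO$. It then suffices to produce, for every ball $B=B(x_0,\e)$ centered at $x_0\in\partial\OO$ and every connected component $Y$ of $B\cap\OO$, a function $h_Y\in X(\OO,V)$ whose derivative is unbounded on $Y$.

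Fix such $B$ and $Y$ and use Lemma~\ref{lem2.2} to pick $\al\in\overline{Y}\cap B\cap\partial\OO$. I would split into two cases. If $\al\notin\overline{V}$, then $V$ is bounded away from $\al$, so $h_Y(z)=1/(z-\al)$ lies in $H(\OO)$ and is bounded on $V$ (hence in $X(\OO,V)$), while $h_Y'(z)=-1/(z-\al)^2$ blows up as $z\to\al$ inside $Y$. If $\al\in\overline{V}\cap\partial\OO$, invoke the hypothesis to obtain $\bi\in\OO^c$ with $[\al,\bi]\subseteq\OO^c$; since $\OO\subseteq\C\setminus[\al,\bi]$ and $\C\setminus[\al,\bi]$ is simply connected, the nonvanishing analytic function $(z-\al)/(z-\bi)$ admits an analytic logarithm $L$ on $\OO$ with $|\Im L(z)|=|\arg\frac{z-\al}{z-\bi}|<\pi$. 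Set $h_Y(z)=e^{iL(z)}$; then $|h_Y(z)|=e^{-\Im L(z)}\in[e^{-\pi},e^{\pi}]$ on $\OO$, so $h_Y$ is globally bounded on $\OO$, and in particular $h_Y\in X(\OO,V)$. Its derivative $h_Y'(z)=ih_Y(z)\bigl(\frac{1}{z-\al}-\frac{1}{z-\bi}\bigr)$ satisfies $|h_Y'(z)|\ge e^{-\pi}|L'(z)|\to\infty$ as $z\to\al$, so $h_Y'|_Y$ is unbounded.

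With the family $\{h_Y\}$ in place, Theorem~\ref{thm2.8} delivers both conclusions simultaneously: the set of $f\in X(\OO,V)$ with $f'$ totally unbounded is $G_\de$ and dense in $X(\OO,V)$, and the set of non-extendable functions in $X(\OO,V)$ is also $G_\de$ and dense.

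The main obstacle is the second case, where one needs a function holomorphic on $\OO$ that remains bounded on $V$ even as $V$ accumulates at $\al$, yet has a derivative that blows up at $\al$. Natural candidates such as $(z-\al)^{1/2}$ or $\sqrt{(z-\al)/(z-\bi)}$ either fail to admit a single-valued branch on $\OO$ or blow up near $\bi$, which might itself lie in $\overline{V}$. The complex power $e^{iL(z)}$ sidesteps this because its modulus depends only on $\arg\frac{z-\al}{z-\bi}$, which is globally bounded on $\OO$ by the simple-connectedness of $\C\setminus[\al,\bi]$ while the real part of $L$, responsible for the blow-up of $L'$, remains unbounded near $\al$.
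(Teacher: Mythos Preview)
Your proof is correct and follows essentially the same route as the paper. The only cosmetic differences are that the paper splits cases according to whether $\overline{V}\cap\partial\OO\cap\overline{Y}$ is empty (and then picks $\al$ inside that set), whereas you first fix $\al\in\overline{Y}\cap\partial\OO$ via Lemma~\ref{lem2.2} and then split on whether $\al\in\overline{V}$; and the paper writes $h_Y=e^{-if}$ where you write $h_Y=e^{iL}$, which is the same construction up to sign.
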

\begin{proof}
Consider a ball $B(x_0, \e)$ such that $x_0\in\partial\OO$ and $Y$ a connected component of $B(x_0, \e)\cap\OO$.
If $\overline{V}\cap\partial\OO\cap\overline{Y}\neq\emptyset$ then for $\al\in\overline{V}\cap\partial\OO\cap\overline{Y}\subset\overline{V}\cap\partial\OO$ there exists $\bi\in\OO^c$ such that $[\al,\bi]\subseteq\OO^c$. As a result a branch of logarithm can be chosen such that  $f(z)=\log\Big(\frac{z-\al}{z-\bi}\Big)$ is a holomorphic function on $\OO$ and $h_Y(z)=e^{-if(z)}$ has the desired property; that is, $h_Y(z)\in X(\OO,V)$ and $h'_{Y}$ is unbounded on $Y$. This follows easily from the fact that $|h_Y(z)|\in(e^{-\pi},e^\pi)$ for all $z\in\OO$ and $h'_{Y}(z)=(-i)h_Y(z)\cdot\frac{\al-\bi}{(z-\al)(z-\bi)}$ and $a\in\overline{Y}$.

If $\overline{V}\cap\partial\OO\cap\overline{Y}=\emptyset$, then according to Lemma \ref{lem2.2} there exists $\al\in\overline{Y}\cap\partial\OO$ which implies that $\al\notin\overline{V}$ and the function $h_Y=\frac{1}{z-\al}$ belongs to $X(\OO,V)$ and $h'_{Y}$ is unbounded on $Y$. The proof is completed as in Theorem \ref{thm2.8}.
\end{proof}
\section{Localization  of $A^p(\OO)$}\label{sec4}
\noindent

This section contains a second application of Theorem \ref{thm2.8}. We consider the case where $\OO$ is a Jordan domain in $\C$ and $J\subseteq\partial\OO$ a relatively open subset of its boundary. The space $A(\OO,J)$ consists of all functions $f\in H(\OO)$ such that $f$ can be continuously extended on $\OO\cup J$. The topology in this space is induced by the seminorms $\|f|_{\De_m}\|_\infty$ where
$$
\De_m=\bigg\{z\in\OO\cup J:\dist(z,\partial\OO\sm J)\ge\frac{1}{m}\bigg\}, \ \ m=1,2,\ld ,
$$
a sequence of compact subsets of $\OO\cup J$, see \cite{4}. We define the space $A^p(\OO,J)$ similarly to the space $A^0(\OO,J)=A(\OO,J)$.
Specifically a function $f$ belongs to $A^p(\OO,J)$ if $f$ is holomorphic on $\OO$ and for every $\el=0,1,2,\cdots,  p$,   the derivative of order $\el$ belongs to $A(\OO,J)$. The topology in this space is induced by the seminorms $\|f^{(\el)}|_{\De_m}\|_\infty$,  $0\le\el\le p$,  $m=1,2,\ld$,  where $\big\{\De_m\big\}^\infty_{m=1}$ are as above.

Obviously $A^p(\OO,J)$ satisfies the requirements of Theorem \ref{thm2.8}. Therefore, in order to prove that the set of functions with a totally unbounded derivative of order $p+1$ is $G_\de$ and dense in this space it suffices to prove that for each ball $B(x_0,\e)$ centered at $x_0\in\partial\OO$ with radius $\e>0$ and each connected component $Y$ of $B(x_0,\e)\cap\OO$ there exists a function $h_Y\in A^p(\OO,J)$ such that $h^{(p+1)}_Y$ is unbounded on $Y$.
\begin{thm}\label{thm4.1}
Let $\OO$ be a Jordan domain and $J\subset\partial\OO$ a relatively open subset of $\partial\OO$ such that for all $\al\in J$ there exists $\bi\in(\overline{\OO})^c$ such that $(\al,\bi]\subseteq(\overline{\OO})^c$. Then the set:
\[\{f\in A^p(\OO,J):f^{(p+1)} \ \ \text{is totally unbounded}\}
\]
is dense and $G_\de$ in $A^p(\OO,J)$.
Furthermore, the set of non-extendable functions in $A^p(\OO,J)$ is $G_\de$ and dense in $A^p(\OO,J)$.
\end{thm}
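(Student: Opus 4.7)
The plan is to invoke Theorem~\ref{thm2.8} with $T=D^{p+1}$. The Fr\'{e}chet topology on $A^{p}(\OO,J)$ induced by the seminorms $\|f^{(\el)}|_{\De_{m}}\|_{\infty}$ is complete, and since $\{\De_{m}\}_{m\ge 1}$ exhausts $\OO$, convergence there implies uniform convergence on compacta of $\OO$. It therefore suffices to produce, for each ball $B(x_{0},\e)$ with $x_{0}\in\partial\OO$ and each connected component $Y$ of $B(x_{0},\e)\cap\OO$, a function $h_{Y}\in A^{p}(\OO,J)$ whose $(p+1)$-th derivative is unbounded on $Y$; both conclusions of the theorem will then follow directly from Theorem~\ref{thm2.8}.

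By Lemma~\ref{lem2.2} there exists $\al\in\overline{Y}\cap\partial\OO\cap B(x_{0},\e)$. If $\al\notin J$, I would take $h_{Y}(z)=1/(z-\al)$: this is holomorphic on $\C\setminus\{\al\}$, and since $\al\in\partial\OO\setminus J$ each $\De_{m}$ satisfies $\dist(\De_{m},\al)\ge 1/m$, so every derivative of $h_{Y}$ is bounded on $\De_{m}$ and $h_{Y}\in A^{p}(\OO,J)$. The $(p+1)$-th derivative equals $\pm(p+1)!/(z-\al)^{p+2}$, which is unbounded on $Y$ because $\al\in\overline{Y}$.

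If instead $\al\in J$, the hypothesis provides $\bi\in(\overline{\OO})^{c}$ with $(\al,\bi]\subseteq(\overline{\OO})^{c}$. Setting $g(z)=(z-\al)/(z-\bi)$, a direct parametrization of the line through $\al$ and $\bi$ yields $g^{-1}((-\infty,0])=[\al,\bi)$, which meets $\OO\cup J$ only at $\al$. Hence $L(z):=\log g(z)$ (principal branch) is single-valued and holomorphic on the open set $\C\setminus[\al,\bi]$, a neighborhood of $(\OO\cup J)\setminus\{\al\}$, and I would set
$$
h_{Y}(z)=(z-\al)^{p+1}L(z).
$$
Using $L'(z)=(z-\al)^{-1}-(z-\bi)^{-1}$ and Leibniz's rule, for $0\le\el\le p$ every summand of $h_{Y}^{(\el)}$ carries an overall factor $(z-\al)^{p+1-\el}$ multiplied by a function of at most logarithmic growth near $\al$; hence $h_{Y}^{(\el)}(z)\to 0$ as $z\to\al$ and $h_{Y}^{(\el)}$ extends continuously to $\OO\cup J$, giving $h_{Y}\in A^{p}(\OO,J)$. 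For $\el=p+1$ the $k=p+1$ term of Leibniz's formula contributes $(p+1)!\,L(z)$, whose real part $\log|g(z)|$ tends to $-\infty$ as $z\to\al$ along $\OO$, while every other term remains bounded near $\al$; hence $h_{Y}^{(p+1)}|_{Y}$ is unbounded. The main technical obstacle is precisely this second construction: the weight $(z-\al)^{p+1}$ is tailored to absorb the logarithmic singularity of $L$ in derivatives of order at most $p$ and release it exactly at order $p+1$, while the geometric hypothesis on $\bi$ is what guarantees single-valuedness of $L$ on a neighborhood of $(\OO\cup J)\setminus\{\al\}$.
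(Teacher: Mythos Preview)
Your proof is correct and follows essentially the same approach as the paper: the same case split (via Lemma~\ref{lem2.2}) between $\al\notin J$, where $h_Y(z)=1/(z-\al)$ suffices, and $\al\in J$, where the key function is $(z-\al)^{p+1}\log\frac{z-\al}{z-\bi}$ (the paper includes a harmless factor $1/(p+1)!$). The only cosmetic difference is that the paper computes the derivatives by an explicit induction while you invoke Leibniz's rule directly; your phrasing ``at most logarithmic growth'' for the $k<\el$ summands is slightly loose (those $L^{(\el-k)}$ have poles, not logarithmic singularities), but after combining with the $(z-\al)^{p+1-k}$ factor the net power $(z-\al)^{p+1-\el}$ emerges just as you claim, so the argument is sound.
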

\begin{proof}
Consider a ball $B(x_0,\e)$ such that $x_0\in\partial\OO$, $\e>0$ and $Y$ a connected component of $B(x_0,\e)\cap\OO$.

If $J\cap\overline{Y}\neq\emptyset$, we pick $\al\in J\cap\overline{Y}$. By hypothesis there exists $\bi\in(\overline{\OO})^c$ such that $(\al,\bi]\subseteq(\overline{\OO})^c$. As a result the function $f(z)=\log\Big(\frac{z-\al}{z-\bi}\Big)$ is a holomorphic function on $\OO$ and its imaginary part $\Arg\Big(\frac{z-\al}{z-\bi}\Big)$ stays bounded on $\OO$.\\
\noindent
{\bf Claim}: The function $$
h_Y(z)=\frac{(z-\al)^{p+1}}{(p+1)!}\log\Big(\frac{z-\al}{z-\bi}\Big)
$$
has the desired property; that is, $h_Y\in A^p(\OO,J)$ and $h^{(p+1)}$ is unbounded on $Y$.\medskip\\
\noindent
{\em Proof of the claim}: A computation gives
\begin{align*}
h'_{Y}(z)&=\frac{(z-\al)^p}{p!}\log\bigg(\frac{z-\al}{z-\bi}\bigg)+\frac{(\al-\bi)}
{(p+1)!}\frac{(z-\al)^p}{z-\bi} \\
&=\frac{(z-\al)^p}{p!}\cdot\log\bigg(\frac{z-\al}{z-\bi}\bigg)
+\frac{(\al-\bi)}{(p+1)!}\left(q_1(z)+\frac{c_1}{z-\bi}\right).
\end{align*}
where $q_1(z)$ is a polynomial of degree $p-1$ and $c_1$ is a constant.
By induction,
\[
f^{(k)}(z)=\frac{(z-\al)^{p+1-k}}{(p+1-k)!}\cdot\log\bigg(\frac{z-\al}{z-\bi}\bigg)
+\sum^k_{n=1}\frac{(\al-\bi)}{(p+1-k+n)!}\cdot\frac{Q_{k-n+1}}{(z-\bi)^n},
\]
where
\[
\frac{Q_{k-n+1}}{(z-\bi)^n}=q^{n-1}_{k-n+1}(z)+(-1)^{n+1}\frac{(n-1)!c_{k-n+1}}
{(z-\bi)^n}
\]
where $q_{k-n+1}$ is the quotient of $(z-\al)^{p-k+n}$ divided by $z-\bi$ and $c_{k-n+1}$ is the remainder of the division; thus, for $k=p+1$ we have
\[
f^{(p+1)}(z)=\log\bigg(\frac{z-\al}{z-\bi}\bigg)+\sum^k_{n=1}\frac{(\al-\bi)}{n!}
\bigg[q^{n-1}_{p+2-n}(z)+(-1)^{n+1}\frac{(n-1)!}{(z-\bi)^n}c_{p+2-n}\bigg]
\]
which is the sum of $\log\Big(\frac{z-\al}{z-\bi}\Big)$ and a rational function bounded near $\al$. Therefore, $f^{(p+1)}$ is unbounded on $\overline{Y}\backepsilon\al$ and on $Y$ while $f^{(k)}\in A(\OO,J)$ since  it can be continuously extended on $J$ by setting $f^{(k)}(\al)=0$ for $0\le k\le p$, where we have used the fact that $\Im\log\frac{z-\al}{z-\bi}=\Arg\frac{z-\al}{z-\bi}$ is bounded on $\OO$. This completes the proof of the claim.

If $J\cap\overline{Y}=\emptyset$, then according to Lemma \ref{lem2.2}, there exists $\al\in\overline{Y}\cap\partial\OO$ which implies $\al\notin J$.
We consider the function $h_Y=\frac{1}{z-\al}$
which has the desired property. This completes the proof.
\end{proof}
\section{Localization of Bergman spaces}\label{sec5}
\noindent

A third application of Theorem \ref{thm2.8} is given now. Let $p\in(0,\infty)$, $\OO\subset\C$ a domain and $V\subset\OO$ a bounded open subset of $\OO$. We define $OL^p(\OO,V)$ to be the space of holomorphic functions on $\OO$ such that $\int_{V}|f|^pdxdy<\infty$, see \cite{2}, \cite{4}.

For $1\le p<\infty$ the topology of $OL^p(\OO,V)$ is the Fr\'{e}ch\'{e}t topology induced by the seminorms
\[
\|f\|_{1,p,V}=\bigg(\iint_V|f|^pdxdy\bigg)^{\frac{1}{p}} \ \ \text{and} \ \ \|f\|_{m,p,V}=\|f|_{K_m}\|_\infty
\]
for $m=2,3,\ld$ where $K_m$ is an exhaustive sequence of compact subsets of $\OO$.  The distance function is
\[
d_{p,V}(f,g)=\sum^{\infty}_{m=1}\frac{1}{2^m}\cdot\frac{\|f-g\|_{m,p,V}}
{1+\|f-g\|_{m,p,V}}.
\]
For $0\le p<1$ we set
\[
d_{1,p,V}(f,g)=\iint_V|f-g|^pdxdy
\]
and
\[
d_{m,p,V}(f,g)=\sup_{z\in K_m}|f(z)-g(z)| \ \ m=2,3,\ld
\]
and the distance function in $OL^p(\OO,V)$ is given by
\[
d_{p,V}(f,g)=\sum^{\infty}_{m=1}\frac{1}{2^m}\cdot\frac{d_{m,p,V}(f,g)}
{1+d_{m,p,V}(f,g)}.
\]
In both cases, that is fro $0<p<\infty$, $OL^p(\OO,V)$ satisfies the requirements of Theorem \ref{thm2.8}.

In addition we consider  the spaces $X_q$ for  $0<q\leq \infty$ which are defined by
$$
X_q=\bigcap_{p<q}OL^p(\OO,V).
$$
 Convergence in $X_q$  is taken to coincide with  convergence in $OL^p(\OO,V)$ for all $p<q$. It is easy to see that  if $p_n$, $n=1,2,3,\ld$ is a strictly increasing sequence converging to $q$, then convergence in $X_q$ is equivalent to convergence in $OL^{p_n}(\OO,V)$ for all $n=1,2,\ld\;.$, i.e. it is induced by  the distance function
\[
d_{X_q,V}(f,g)=\sum^{\infty}_{n=1}\frac{1}{2^n}\cdot\frac{d_{p_n,V}(f,g)}
{1+d_{p_n,V}(f,g)}.
\]

One can check that the space $X_q$ satisfies the requirements of Theorem \ref{thm2.8}.
Thus, in order to prove that in each of the previously mentioned spaces the set of totally unbounded functions and the set of non-extendable functions are $G_\de$ and dense it suffices to find the function $h_Y$ of Theorem \ref{thm2.8}.
\begin{thm}\label{thm5.1}
Let $\OO\subset\C$ be a domain and $V\subset\OO$ a bounded open set, such that for each $\al\in\overline{V}\cap\partial\OO$ there exists $\bi\in\OO^c$ with $[\al,\bi]\subseteq\OO^c$. Let $X$ be any of the spaces $OL^p(\OO,V)$ or $X_q$
as above.  Then the set of totally unbounded functions in $X$ is a $G_\de$ and dense subset of $X$. Furthermore, the set of non-extendable functions in $X$ is also a $G_\de$ and dense subset of $X$.
\end{thm}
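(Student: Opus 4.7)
The plan is to invoke Theorem \ref{thm2.8} with $T$ equal to the identity. The hypotheses on $X$ (complete metric topological vector space in which convergence forces uniform convergence on compacta of $\OO$) were already verified for both $OL^p(\OO,V)$ and $X_q$ in the discussion preceding the statement, so what remains is to produce, for each ball $B(x_0,\e)$ with $x_0\in\partial\OO$ and each component $Y$ of $B(x_0,\e)\cap\OO$, a function $h_Y\in X$ whose restriction to $Y$ is unbounded.

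I would split into two cases, mirroring the arguments of Theorems \ref{thm3.1} and \ref{thm4.1}. If $\overline{V}\cap\partial\OO\cap\overline{Y}=\emptyset$, Lemma \ref{lem2.2} furnishes an $\al\in\overline{Y}\cap\partial\OO$, and this $\al$ automatically lies at positive distance from $V$; the rational function $h_Y(z)=\frac{1}{z-\al}$ is then holomorphic on $\OO$, uniformly bounded on $V$ (hence trivially in $L^p(V)$ for every $p\in(0,\infty)$), and blows up on $Y$ as $z\to\al$.

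The more delicate case is $\overline{V}\cap\partial\OO\cap\overline{Y}\neq\emptyset$. Picking $\al$ in this intersection and using the geometric assumption to obtain $\bi\in\OO^c$ with $[\al,\bi]\subseteq\OO^c$, I would take a single-valued branch $h_Y(z)=\log\bigl(\frac{z-\al}{z-\bi}\bigr)$; this is holomorphic on $\OO$ because the cut $[\al,\bi]$ is disjoint from $\OO$, its imaginary part $\Arg\frac{z-\al}{z-\bi}$ is bounded on $\OO$, and its real part $\log|z-\al|-\log|z-\bi|$ tends to $-\infty$ as $z\to\al$ through $Y$, so $h_Y|_Y$ is unbounded. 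The main technical point I expect is verifying $h_Y\in OL^p(\OO,V)$, namely $\iint_V|h_Y|^p\,dx\,dy<\infty$. This reduces, via the triangle inequality and the boundedness of $V$, to the local $L^p$-integrability of $|\log|z-\al||^p$ and $|\log|z-\bi||^p$ near their respective singularities; a polar-coordinate computation $\int_0^{r_0}\rho\,|\log\rho|^p\,d\rho<\infty$ settles this for every $p\in(0,\infty)$, and hence the same $h_Y$ also serves $X_q=\bigcap_{p<q}OL^p(\OO,V)$, since it lies in each $OL^{p_n}(\OO,V)$ for any sequence $p_n\uparrow q$.

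With $h_Y$ available in both cases, Theorem \ref{thm2.8} applied with $T$ the identity directly delivers both assertions of Theorem \ref{thm5.1}: the set of totally unbounded functions and the set of non-extendable functions in $X$ are each $G_\de$ and dense.
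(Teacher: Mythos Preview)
Your proposal is correct and follows essentially the same approach as the paper: the same case split on whether $\overline{Y}\cap\overline{V}\cap\partial\OO$ is empty, the same choice of $h_Y=\frac{1}{z-\al}$ in the easy case and $h_Y=\log\frac{z-\al}{z-\bi}$ in the harder case, and the same verification ingredients (bounded argument, triangle inequality, polar-coordinate integrability of $|\log|z-\al||^p$) before appealing to Theorem~\ref{thm2.8}.
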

\begin{proof}
Let $B(\zi,\e)$ be a ball centered at $\zi\in\partial\OO$ with radius $\e>0$ and let $Y$ be a connected component of $\OO\cap B(\zi,\e)$. According to Theorem \ref{thm2.8} it suffices to find a function $h_Y\in X$ whose restriction on $Y$ is unbounded.

If $\overline{Y}\cap\overline{V}\cap\partial\OO\neq\emptyset$ we take  $\al\in\overline{Y}\cap\overline{V}\cap\partial\OO$ and by assumption there exists $\bi\in\OO^c$ such that $[\al,\bi]\subseteq\OO^c$. Hence the function $f(z)=\log\Big(\frac{z-\al}{a-\bi}\Big)$ is holomorphic on $\OO$. We set $h_Y(z)=\log\Big(\frac{z-\al}{z-\bi}\Big)$. This function belongs to all spaces $X$ mentioned above. This can be easily seen using the following facts:
\begin{enumerate}
\item[1)] the triangle  inequality of the $p$-norm
\item[2)] the fact that the $\arg\Big(\frac{z-\al}{z-\bi}\Big)$ is bounded on $V$
\item[3)] the fact that the set $V$ is bounded which implies $(\log|z-\al|)|z-\al|^{\frac{1}{p}}\le C_{p,R}<\infty$ for all $z$ belonging to a ball $B(\al,R)$ containing $V$
\item[4)] the fact that $\int_{B(\al,R)}|\log|z-\al|\,|^p dxdy<\infty$ for each $p\in(0,\infty)$ which follows easily using polar coordinates with center $\al$.
\end{enumerate}

If $\overline{Y}\cap\overline{V}\cap\partial\OO=\emptyset$, then according to Lemma \ref{lem2.2} we have $\overline{Y}\cap\partial\OO\neq\emptyset$. In this case we take  $\al\in\overline{Y}\cap\partial\OO$ which implies $\al\notin\overline{V}$ and the function $h_Y=\frac{1}{z-\al}$ is unbounded on $Y$ and belongs to $X$, because it is bounded on $V$.
This completes the proof according to Theorem \ref{thm2.8}.
\end{proof}
\section{Localization of Hardy spaces in the disc}\label{sec6}
\noindent

We now consider localized  Hardy spaces in the unit disc $D$ in $\C$.
Let $0<p<\infty$ and $J=\{e^{i\thi}:\al\le\thi\le\bi\}$, $\al<\bi\le\al+2\pi$,  a closed arc of the unit circle $\T$.  We define $H^p(D,J)$ to be the space of holomorphic functions on $D$ such that
$$
\sup_{0\le r<1}\int^\bi_\al\big|f(re^{i\thi})\big|^pd\thi<\infty.
$$
 For $1\le p<\infty$ the topology of $H^p(D,J)$ is the Fr\'{e}ch\'{e}t topology induced by the seminorms
$$
\|f\|_{1,p,J}=\sup_{0\le r<1}\bigg(\int^\bi_\al|f(re^{i\thi})|^pd\thi\bigg)^{1/p}
$$
and
$$ \ \
\|f\|_{m,p,J}=\sup_{|z|\le1-\frac{1}{m}}|f(z)| \ \ m=2,3,\ld\;,
$$
 and the distance function is
\[
d_{p,J}(f,g)=\sum^\infty_{m=1}\frac{1}{2^m}\frac{\|f-g\|_{m,p,J}}
{1+\|f-g\|_{m,p,J}}.
\]
For $0<p<1$ we set
\[
d_{1,J,p}(f,g)=\sup_{0\le r<1}\int^\bi_\al|f(re^{i\thi})-g(re^{i\thi})|^pd\thi,
\]
\[
d_{m,p,J}(f,g)=\sup_{|z|\le1-\frac{1}{m}}|f(z)-g(z)|, \ \ m=2,3,\ld
\]
so the distance function  is given by
\[
d_{p,J}(f,g)=\sum^\infty_{m=1}\frac{1}{2^m}\frac{d_{m,p,J}(f,g)}
{1+d_{m,p,J}(f,g)}.
\]
It is easy to check that for $0<p<\infty$ the space $H^p(D,J)$ satisfies the requirements of the Theorem \ref{thm2.8}.

We also consider the spaces $X_q(D,J)$ for  $q\in(0,\infty]$ defined as  $X_q(D,J)=\cap_{p<q}H^p(D,J)$. Convergence in this space is equivalent to convergence in $H^p(D,J)$ for all $p<q$, which coincides with convergence in
$H^{p_n}(D,J)$ for all $n$, where $p_n$, $n=1,2,\ld$ is a strictly increasing sequence convergent to $q$.  The distance function in $X_q(D,J)$ is given by
\[
d_{X_{q,J}}(f, g)=\sum^\infty_{n=1}\frac{1}{2^n}\frac{d_{p_n,J}(f, g)}{1+d_{p_n,J}(f, g)}.
\]
One can check that the space $X_q(D,J)$ satisfies the requirements of Theorem \ref{thm2.8}. If we consider a denumerable family of spaces of the previous type with varying $J$, $p$ or $q$ then the intersection of these spaces endowed with a distance function as above also satisfies the requirements of Theorem \ref{thm2.8}.

In addition  if $O\subset\T$ is a relatively open set and $J_m$ are defined,
\[
J_m=\bigg\{z\in O:\dist(z,\T-O)\ge\frac{1}{m}\bigg\}, \ \ m=1,2,\ld\;,
\]
then $J_m$ are  an exhaustive family of compact subsets of $O$, and we define the space $H^p(D,O)$ by
\[
H^p(D,O)=\Big\{f\in H(D):\sup_{0\le r<1}\int_{J_m}|f(re^{i\thi})|^pd\thi<\infty \;\text{for all}\;m=1,2,\ld\Big\}.
\]
We notice that each $J_m$ is a finite disjoint union of closed subarcs of $\T$. Thus $H^p(D,O)$ coincides with the set of $f$ holomorphic on $D$ such that $f\in H^p(D,J)$ for all closed arcs $J\subset O$ and endowed with the  natural topology induced by $H^p(D,J_m)$, $m=1,2,\ld$, and  satisfies the requirements of Theorem \ref{thm2.8}. Furthermore, spaces that are  denumerable intersections of such spaces, satisfy the requirements of Theorem \ref{thm2.8}.

Thus, in order to prove that in each one of the  previous spaces the set of totally unbounded functions and the set of non extendable functions are $G_\de$ and dense it suffices to find the function $h_Y$ of Theorem \ref{thm2.8}.

Let $Y=D\cap\{z:|z-1|<\e\}$, $\e>0$. It is well known that the function  $h_Y(z)=\log(1-z)$ belongs to all Hardy spaces  $H^p(D)$, $0<p<\infty$, therefore it also belongs to all localized Hardy spaces or their intersections as  considered above. This function $h_Y$ is unbounded on $Y$, and its rotations $h_{Y, \f} (z)=\log(1-e^{-i\f}z)$ are unbounded $Y_{\f}=D\cap\{z:|z-e^{i\f}|<\e\}$.  Thus, Theorem \ref{thm2.8} implies the following.
\begin{thm}\label{thm6.1}
Let $X$ be any localized $H^p(D,J)$, $J\subset\T$ closed arc, $0<p<\infty$ or any denumerable intersection of them. Then the set of totally unbounded functions in $X$ is a $G_\de$ and dense subset of $X$. Furthermore, the set of non-extendable functions in $X$ is also a $G_\de$ and dense subset of $X$.
\end{thm}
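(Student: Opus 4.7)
The plan is to apply Theorem \ref{thm2.8} with $T$ equal to the identity and $\OO=D$. The paragraphs preceding the statement of Theorem \ref{thm6.1} have already verified that each space occurring in the theorem — the localized Hardy space $H^p(D,J)$ for a closed arc $J\subset\T$ and $0<p<\infty$, and any denumerable intersection of such spaces — is a complete metrizable topological vector space on which convergence implies uniform convergence on compact subsets of $D$. Hence the first two hypotheses of Theorem \ref{thm2.8} are already in place, and what remains is to produce, for each connected component $Y$ of $D\cap B(\zi,\e)$ with $\zi\in\T$ and $\e>0$, a function $h_Y\in X$ whose restriction $h_Y|_Y$ is unbounded.

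To construct $h_Y$, I would first invoke Lemma \ref{lem2.2} with $U=B(\zi,\e)$ to select a point $e^{i\f}\in\overline{Y}\cap\T\cap B(\zi,\e)$. With this $\f$ fixed, I would set
$$
h_Y(z)=\log\bigl(1-e^{-i\f}z\bigr),
$$
using the principal branch of logarithm, which is well defined and holomorphic on $D$ because the image of $D$ under $z\mapsto 1-e^{-i\f}z$ is the open disc $\{w:|w-1|<1\}$, entirely contained in the open right half plane. The function $\log(1-z)$ is classically known to belong to $H^p(D)$ for every $p\in(0,\infty)$, so the same holds for its rotation $h_Y$. Because $H^p(D)\subset H^p(D,J)$ for any closed arc $J\subset\T$, this places $h_Y$ in every factor appearing in any denumerable intersection defining $X$, hence in $X$ itself. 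Unboundedness of $h_Y$ on $Y$ follows at once by choosing a sequence $z_n\in Y$ with $z_n\to e^{i\f}$, since then $1-e^{-i\f}z_n\to 0$ and $|h_Y(z_n)|\to\infty$.

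Having produced such an $h_Y$ for every admissible $Y$, Theorem \ref{thm2.8} applied with $T$ the identity yields that the set of totally unbounded functions in $X$ and the set of non-extendable functions in $X$ are each $G_\de$ and dense subsets of $X$, which is exactly the conclusion of Theorem \ref{thm6.1}. The only nontrivial ingredient hidden in this plan is the classical assertion that $\log(1-z)\in H^p(D)$ for every $p\in(0,\infty)$; apart from this, and the routine verification that a countable family of Fr\'echet seminorms assembles into a complete translation-invariant metric on any denumerable intersection, there is no substantive obstacle.
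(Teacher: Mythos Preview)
Your proposal is correct and follows essentially the same route as the paper: verify that $X$ meets the hypotheses of Theorem \ref{thm2.8}, then for each admissible $Y$ use the rotated logarithm $h_Y(z)=\log(1-e^{-i\f}z)$, which lies in every $H^p(D)$ and hence in every localized $H^p(D,J)$ and in any denumerable intersection thereof, and which blows up near $e^{i\f}\in\overline{Y}\cap\T$. Your invocation of Lemma \ref{lem2.2} is harmless but unnecessary in the disc case, since $D\cap B(\zi,\e)$ is convex (hence connected) and the center $\zi\in\T$ itself already lies in $\overline{Y}$; the paper simply takes $e^{i\f}=\zi$ directly.
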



It is interesting to observe that, as in the case of classical Hardy spaces, functions in a localized Hardy space posses nontangential limits on the appropriate arc. 
\begin{prop}\label{prop6.2}
Let $J=\{e^{i\thi}:A\le\thi\le B\}\subset\T$, $A<B\le A+2\pi$, be a closed arc and $0<p<\infty$. If $f\in H^p(D,J)$ then  for almost all $e^{i\thi}$ in $J$ with respect to arc length measure the non-tangential limit of $f(z)$ at $e^{i\thi}$ exists.
\end{prop}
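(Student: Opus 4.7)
The plan is to reduce to the classical almost-everywhere existence of non-tangential limits for $H^p(D)$ by exhibiting $f$, when restricted to a suitable Jordan subdomain $\Omega\subset D$, as a member of the Smirnov class $E^p(\Omega)$. The subdomain $\Omega$ will be a circular sector chosen so that its boundary contains an open arc of $\T$ which is a relative neighbourhood of a prescribed compact sub-arc of $J$.

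Any open sub-interval of $(A,B)$ is a countable union of closed sub-intervals $[A_1,A_2]$ with $A<A_1\le A_2<B$, and the endpoints $e^{iA},e^{iB}$ form a null set, so it is enough to establish the claim on each sub-arc $J'=\{e^{i\theta}:A_1\le\theta\le A_2\}$. Fix one. By Fubini applied to the finite double integral
\[
\int_0^1\int_A^B|f(re^{i\theta})|^p\,d\theta\,dr,
\]
the one-dimensional radial integral $\int_0^1|f(re^{i\theta})|^p\,dr$ is finite for almost every $\theta\in[A,B]$, so one may choose $A'\in(A,A_1)$ and $B'\in(A_2,B)$ for which both $\int_0^1|f(re^{iA'})|^p\,dr$ and $\int_0^1|f(re^{iB'})|^p\,dr$ are finite. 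Set
\[
\Omega=\{re^{i\theta}:0<r<1,\ A'<\theta<B'\}.
\]
This is a Jordan subdomain of $D$ with piecewise smooth, rectifiable boundary comprising two radial segments and the arc $J''=\{e^{i\theta}:A'\le\theta\le B'\}$ of $\T$, and $J'$ lies in the relative interior of $J''$.

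Exhaust $\Omega$ by the sub-sectors $\Omega_s=\{re^{i\theta}:0<r<s,\ A'<\theta<B'\}$ with $s\uparrow 1$. The boundary integral $\int_{\partial\Omega_s}|f|^p\,d\sigma$ decomposes into the two radial pieces, each bounded uniformly in $s$ by the choice of $A',B'$, and the outer arc $\{se^{i\theta}:A'\le\theta\le B'\}$, whose contribution is at most $s\cdot\sup_{0\le r<1}\int_A^B|f(re^{i\theta})|^p\,d\theta<\infty$. Hence $\sup_{s<1}\int_{\partial\Omega_s}|f|^p\,d\sigma<\infty$, i.e.\ $f|_\Omega$ belongs to the Smirnov class $E^p(\Omega)$. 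The classical theorem on Hardy/Smirnov spaces over a Jordan domain with rectifiable boundary then produces non-tangential boundary values of $f|_\Omega$ (relative to $\Omega$) at almost every point of $\partial\Omega$. At each point $e^{i\theta_0}$ in the relative interior of $J''$, and in particular at every point of $J'$, a neighbourhood of $e^{i\theta_0}$ in $\partial\Omega$ coincides with a neighbourhood of $e^{i\theta_0}$ in $\T$, so the $\Omega$- and $D$-non-tangential cones at $e^{i\theta_0}$ agree near that point and the two notions of non-tangential limit coincide.

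The main hurdle is the invocation of the classical theorem that $E^p$-functions on a Jordan domain with rectifiable boundary possess non-tangential boundary values almost everywhere. One can avoid citing this directly by composing $f$ with a Riemann map $\Phi:D\to\Omega$, verifying via change of variables that $f\circ\Phi\in H^p(D)$, and then appealing to Kellogg's theorem at the smooth part of $\partial\Omega$ to transport non-tangential cones back through $\Phi^{-1}$. All remaining steps are routine: a Fubini argument, the splittings of the boundary integrals above, and a countable exhaustion of $J$ by sub-arcs $J'$.
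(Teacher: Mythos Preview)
Your argument is essentially the paper's own: Fubini on the double integral to select two radii with finite $p$-integrals, form the sector $\Omega$, verify $f|_\Omega\in E^p(\Omega)$, and cite Duren for a.e.\ non-tangential limits on $\partial\Omega$. One technical point worth tightening: your exhausting curves $\partial\Omega_s$ are not Jordan curves lying \emph{inside} $\Omega$, since their two radial sides sit on $\partial\Omega$ itself (and they all pass through the corner at $0$); the usual definition of $E^p(\Omega)$ asks for curves contained in $\Omega$ that eventually surround each compact subset. The paper addresses exactly this by replacing the radii with nearby parallel segments $[P,Q]$ and $[P,R]$ strictly inside $\Omega$ and using uniform continuity of $f$ on the compact set $\{re^{i\theta}:0\le r\le r_\varepsilon,\ A'\le\theta\le B'\}\subset D$ to keep those integrals within $1$ of $M_{A'}$ and $M_{B'}$. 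Your additional remark that the $\Omega$- and $D$-non-tangential approach regions coincide at interior points of $J''$ is a useful clarification the paper leaves implicit.
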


\begin{proof} Since $\sup_{0\le r<1}\int^B_A|f(re^{i\thi})|^pd\thi=M<\infty $ we have 
$$
\int^1_{0}\int^B_{A}|f(re^{i\thi})|^pd\thi dr<\infty.
$$
By Fubini's theorem for almost all $\al$ and $\bi$, $A<\al<\bi<B$ we have
$$
\int^1_{0}|f(re^{i\al})|^pdr=M_\al<\infty \ \ \text{and} \ \ \int^1_{0}|f(re^{i\bi})|^pdr=M_\bi<\infty
$$
For such $\al$ and $\bi$ it suffices to prove that for almost all $\thi$ in $(\al,\bi)$ the non-tangential limit of $f(z)$ at $e^{i\thi}$ exists.

We consider the sector $\OO=\{re^{i\thi}:0<r<1,\;\thi\in(\al,\bi)\}$ whose boundary has finite length. According to the discussion in \cite{1} it suffices to show that $f|_\OO$ belongs to the Hardy space $E^p(\OO)$, consisting of function on $\OO$ whose integral of $p$-powers along a sequence of rectifiable Jordan curves tending to the boundary remains bounded. 

Let $\e>0$ and $r_\e\in (1-\frac{\e}{2},1)$. Then
$$
\int^\bi_\al|f(r_\e e^{i\thi})|^pd\thi\le\sup_{0\le r<1}\int^\bi_\al|f(re^{i\thi})|^pd\thi=M<\infty
$$
The function $f$ is uniformly continuous on the compact set $\{re^{i\thi}:0\le r\le r_\e,a\le\thi\le\bi\}\subset D$. We can find segments $[P,Q],[P,R]$ with $P,Q,R\in\OO$ and  $|Q|=|R|=r_\e$, such that  $[P,Q]$ is parallel to $[0,e^{i\al}]$, $[P,R]$ is parallel to $[0,e^{i\bi}]$ and such that the  curve $[P,Q]\cup\overset{\frown }{QR}\cup[R,P]$ is $\e$-close to $\partial\OO$, where $\stackrel\frown{QR}$ is an arc in the circle with center 0 and radius $r_\e$, in such a way that  $\int_{[P,Q]}|f(z)|^p|dz|<M_\al+1$ and $\int_{[P,R]}|f(z)|^p|dz|<M_\bi+1$. It follows that
$$
\int_{[P,Q]\cup\stackrel\frown {QR}\cup[R,P]}|f(z)|^p|dz|<M+M_\al+M_\bi+2.
$$
Since $M+M_\al+M_\bi+2$ is independent of $\e$, it follows that $f|_\OO$ belongs to the Hardy space $E^p(\OO)$ and the proof is complete.
\end{proof}


\section{Functions with Taylor coefficients in $l^p$, $c_o$ and $l^{\infty}$.}\label{sec7}

We next consider the spaces $l^p_a$, $c_{0,a}$ and $l^{\infty}_a$  consisting of holomorphic functions in the unit disc whose sequence of Taylor coefficients is $p$-summable, a null sequence, or a bounded sequence respectively. More precisely for  $0<p\leq \infty$ let
$$
l^p_a=\{ f(z)=\sum_{n=0}^{\infty}a_nz^n:  (a_n)\in l^p \}
$$
where $l^p$ is the classical space of complex sequences $(a_n)$ such that $\n{(a_n)}_p=(\sum_{n=0}^{\infty}|a_n|^p)^{1/p}<\infty$, and $\n{(a_n)}_{\infty}=\sup_n|a_n|$ for $p=\infty$. For  $c_{0,a}$ the sequence $(a_n)$ tends to $0$.  It is clear that each power series in these spaces  has radius of convergence at least $1$ since in all cases $(a_n)$ is a bounded sequence,  so the series  defines an analytic function $f$ on the unit disc $D$.
For $1\leq p\leq \infty$ the spaces $l^p_a$ are Banach spaces with the norm inherited from  $l^p$, and the same is true for $c_{0,a}$ with  norm inherited from $l_a^{\infty}$.  For $0<p<1$ the distance function
$$
d_p(f, g)= \sum^\infty_{n=0}|f(n)-g(n)|^p.
$$
(where we use the notation $f(n)$ for the $n$th Taylor coefficient of an $f$),  
is a complete metric and makes $l^p_a$ into a complete linear topological   space.

If $X$ is any of the above spaces then it can be checked easily that convergence in $X$ of  a sequence $(f_n)$, $f_n(z)=\sum^\infty_{k=0}f_n(k) z^k$,  to a function $f(z)=\sum^\infty_{k=0}f(k) z^k$ implies
$$
\sup_{k}|f_n(k)-f(k)|\to 0, \,\,\text{as}\,\, n\to \infty.
$$
Thus if $0<r<1$ then for $|z|<r$,
\begin{align*}
|f_n(z)-f(z)|&\leq 
\sum^\infty_{k=0}|f_n(k)-f(k)||z|^k
 \leq \sup_{k}|f_n(k)-f(k)|\sum_{k=0}^{\infty}r^k\\
  &=\frac{1}{1-r} \sup_{k}|f_n(k)-f(k)|, 
\end{align*}
and it follows that convergence in norm  implies  uniform convergence on compact subsets  of $D$.   Thus $X$ satisfies  the requirements of Theorem \ref{thm2.8}.

We may also consider the spaces  $X_q=\cap_{p>q}\el^p_a$ for  $0\leq q<\infty$ with the distance function
$d_q(f, g)=\sum^\infty_{n=0}\frac{1}{2^n}\frac{d_{p_n}}{1+d_{p_n}}
$
where $(p_n)$ is a strictly decreasing sequence of indices converging to $q$, and $d_{p_n}$ is the distance function in $l^{p_n}_a$. Then it is easily verified that the space $X_q$, $0\le q<\infty$, also  satisfies  the requirements of Theorem \ref{thm2.8}.

\begin{thm}\label{thm7.1}
Let $X$ be any  of the spaces $l^p_a$, $(1<p\leq \infty)$, $c_{0, a}$  or $X_q$, $(1\le q<\infty)$. Then the set of totally unbounded functions in $X$ is a $G_\de$ and dense subset of $X$, and the same is true  for the set of non-extendable functions in $X$.
\end{thm}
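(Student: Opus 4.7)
The plan is to apply Theorem \ref{thm2.8} with $T$ equal to the identity and $\OO=D$. The hypotheses on $X$ (complete metric topological vector space in which convergence implies uniform convergence on compacta of $D$) have already been verified in the discussion preceding the theorem for each of $\el^p_a$ ($1<p\le\infty$), $c_{0,a}$, and $X_q$ ($1\le q<\infty$). Thus the only remaining task is to produce, for every ball $B(\zi,\e)$ with $\zi\in\T$ and every connected component $Y$ of $D\cap B(\zi,\e)$, a function $h_Y\in X$ whose restriction to $Y$ is unbounded.

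First I would invoke Lemma \ref{lem2.2} to pick a point $e^{i\f}\in\overline{Y}\cap\T\cap B(\zi,\e)$. The natural candidate is then
$$
h_Y(z)=\log\bigl(1-e^{-i\f}z\bigr)=-\sum_{n=1}^{\infty}\frac{e^{-in\f}}{n}\,z^n,
$$
where the principal branch is used. Since $e^{i\f}\in\overline{Y}$, any sequence $z_k\in Y$ with $z_k\to e^{i\f}$ satisfies $|h_Y(z_k)|\to\infty$, so $h_Y|_Y$ is unbounded. It remains to check that $h_Y\in X$ in each case. The $n$-th Taylor coefficient has modulus $1/n$, so: for $\el^p_a$ with $1<p<\infty$ the membership reduces to the convergence of $\sum_{n\ge1}n^{-p}$, which holds because $p>1$; for $\el^\infty_a$ the sequence $(1/n)$ is obviously bounded; for $c_{0,a}$ we use $1/n\to0$; and for $X_q=\bcap_{p>q}\el^p_a$ with $q\ge1$ we need $h_Y\in\el^p_a$ for every $p>q$, and such $p$ automatically satisfy $p>1$, so the previous computation applies.

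Having produced $h_Y$ in all required spaces, Theorem \ref{thm2.8} immediately yields that the set of totally unbounded functions in $X$ is $G_\de$ and dense, and, combined with Proposition \ref{prop2.7}, that the set of non-extendable functions in $X$ is also $G_\de$ and dense. There is no substantive obstacle in this argument: the whole point of the theorem is to certify that these spaces fit the framework of Theorem \ref{thm2.8}, and the one nontrivial ingredient is the elementary observation that the single function $\log(1-e^{-i\f}z)$ lies in every one of the listed spaces because its coefficients decay like $1/n$, but no faster. (This also explains why the boundary cases $p=1$ and $q<1$ are excluded from the statement.)
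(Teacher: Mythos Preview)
Your proof is correct and essentially identical to the paper's own argument: both apply Theorem~\ref{thm2.8} with $T$ the identity, and both use the function $\sum_{n\ge 1}\frac{e^{-in\f}}{n}z^n$ (equivalently, $-\log(1-e^{-i\f}z)$) as the locally unbounded element $h_Y$, checking membership in each space via $\sum n^{-p}<\infty$ for $p>1$. Your invocation of Lemma~\ref{lem2.2} is unnecessary here since $D\cap B(\zi,\e)$ is convex and hence has the single component $Y$ with $\zi\in\overline{Y}$, but this does no harm; also, the non-extendability conclusion is already part of Theorem~\ref{thm2.8}, so the separate appeal to Proposition~\ref{prop2.7} is redundant.
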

\begin{proof}
To apply Theorem \ref{thm2.8} it suffices to find the function $h_Y$ in $X$. Let $Y=D\cap\{z:|z-1|<\e\}$, $\e>0$. Then function  $h_Y(z)=\sum^\infty_{k=1}\frac{1}{k}z^k$  satisfies $\lim_{r\to 1^-}h_Y(r)=\infty$ and $h_Y$ is unbounded on $Y$. It  belongs to the space $X$ because $\sum^\infty_{k=1}\frac{1}{k^p}<\infty$ for all $1<p<\infty$. If $Y=D\cap\{z:|z-e^{i\thi}|<\e\}$ then the rotated function $h_Y(z)=\sum^\infty_{k=1}\frac{1}{k}e^{-ik\thi}z^k$ has the same properties on $Y$, and Theorem \ref{thm2.8} applies and gives the assertion.
\end{proof}

If $f\in l^p_a$ with $0<p\leq 1$ then $f$ extends continuously on the closed disc $\overline{D}$ and is bounded, but the derivative  $f'$ can be totally unbounded.
\begin{thm}\label{thm7.2}
Let $X$ be any  of the space $l^p_a$, $(0<p\leq \infty)$, $c_{0,a}$  or $X_q$, $0\le q<\infty$. Then the set of functions $f\in X$ such that the derivative $f'$ is totally unbounded is a $G_\de$ and dense subset of $X$. The same holds for the set of non-extendable functions in $X$.
\end{thm}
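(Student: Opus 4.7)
The plan is to apply Theorem \ref{thm2.8} with the operator $T$ chosen to be complex differentiation, $Tf=f'$. The ambient hypotheses on $X$ (complete metric topological vector space of holomorphic functions on $D$, with convergence implying uniform convergence on compacta of $D$) have already been verified for each of the listed spaces in the discussion preceding Theorem \ref{thm7.1}. What remains is to produce, for every boundary point $\zi=e^{i\f}\in\T$, every $\e>0$ and every connected component $Y$ of $D\cap B(\zi,\e)$, a function $h_Y\in X$ such that $h_Y'|_Y$ is unbounded. Since $D$ and $B(\zi,\e)$ are both convex, $Y=D\cap B(\zi,\e)$ is the unique such component and it contains the radial segment $\{re^{i\f}:r_0<r<1\}$ for some $r_0<1$.

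The candidate I would use is the Hadamard-lacunary series
$$
h_Y(z)=\sum_{k=1}^{\infty}\frac{e^{-i2^k\f}}{2^k}\,z^{2^k},
$$
a rotated version of $\sum 2^{-k}z^{2^k}$ engineered so that its derivative blows up on the radius toward $\zi$. Two short verifications remain: first, that $h_Y$ lies in each of the listed spaces, because its Taylor coefficients $a_n$ vanish off the set $\{2^k\}$ with $|a_{2^k}|=2^{-k}$, giving $\sum_n |a_n|^p=\sum_k 2^{-kp}<\infty$ for every $p>0$ (and $a_n\to 0$, $|a_n|\le 1$, handling $c_{0,a}$ and $\el^{\infty}_a$ as well, and $X_q$ as an intersection); second, that term-by-term differentiation (valid on compacta of $D$) simplifies at $z=re^{i\f}$ to
$$
h_Y'(re^{i\f})=e^{-i\f}\sum_{k=1}^{\infty}r^{2^k-1},
$$
which diverges to $+\infty$ as $r\to 1^-$, showing $h_Y'$ is unbounded on $Y$.

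With these two facts in hand, Theorem \ref{thm2.8} directly yields that the set of $f\in X$ for which $f'$ is totally unbounded is a $G_\de$ and dense subset of $X$. Proposition \ref{prop2.4} then upgrades non-extendability of $f'$ to non-extendability of $f$, and the final assertion of Theorem \ref{thm2.8} gives the $G_\de$-density of the non-extendable functions in $X$.

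I expect the main obstacle to have been the construction of $h_Y$ in the regime of small $p$. A naive monomial series $\sum k^{-\al}e^{-ik\f}z^k$ forces the incompatible constraints $\al p>1$ (for $\el^p$-membership) and $\al\le 1$ (for radial blow-up of the derivative at $\zi$) as soon as $p\le 1$. The Hadamard-lacunary choice circumvents this: its exponentially sparse support makes $\sum|a_n|^p$ convergent for every $p>0$ simultaneously, while the rapidly increasing exponents preserve the divergence of $\sum_k r^{2^k-1}$ on radial approach to $\zi$.
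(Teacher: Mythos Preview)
Your proof is correct and follows essentially the same route as the paper: apply Theorem \ref{thm2.8} with $T=f\mapsto f'$, take the lacunary series $h_Y(z)=\sum_k 2^{-k}e^{-i2^k\f}z^{2^k}$ (which lies in every $\el^p_a$, $c_{0,a}$, $X_q$), and observe that its derivative blows up along the radius toward $e^{i\f}$. Your added remarks on the convexity of $D\cap B(\zi,\e)$ and on why a non-lacunary series fails for small $p$ are helpful commentary but not part of the paper's argument.
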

\begin{proof}
We will apply again  Theorem \ref{thm2.8}. Let $Y=D\cap\{z:|z-1|<\e\}$.  
The function $h_Y(z)=\sum^\infty_{n=0}\frac{1}{2^n}z^{2^n}$
 belongs to $X$ since $\sum^\infty_{n=0}\left(\frac{1}{2^n}\right)^p=\frac{2^p}{2^p-1}<\infty$
for all $p>0$. Its derivative  $h'_Y(z)=\sum^\infty_{n=0}z^{2^n-1}$ satisfies $\lim_{r\to 1^-}h'_Y(r)=\infty$ and $h'_Y$ is unbounded on $Y$. If $Y=D\cap\{z:|z-e^{i\thi}|<\e\}$ then  $h_Y(z)=\sum^\infty_{n=0}\frac{e^{-i2^n\thi}}{2^n} z^{2^n}$ has the same property on $Y$. Thus Theorem \ref{thm2.8} applies.
\end{proof}

\textbf{Remark}.  Since the sets $D\cap\{z:|z-e^{i\thi}|<\e\}$ are convex, it follows easily that, if $g$ is holomorphic on $D$ and  the derivative $g^{(k)}$ for some $k\ge 0$ is totally unbounded on $D$ then for every $\el\ge k$ the derivative $g^{(\el)}$ is also totally unbounded. Thus the conclusion of  Theorem \ref{thm7.1} implies that the set of $f\in X$ such that every derivative $f^{(\el)}$, $\el\ge0$, is totally unbounded is a $G_\de$ and dense subset of $X$. Similarly in Theorem \ref{thm7.2} the set of $f\in X$ such that every derivative $f^{(\el)}$, $\el\ge1$, is totally unbounded is a $G_\de$ and dense subset of $X$.

Finally, we mention that in the spaces $X$ considered in this section a necessary and sufficient condition so that the set of totally unbounded functions $f$ in $X$ is $G_\de$ and dense is that there exists an unbounded function $g$ in $X$. One can easily prove the above statement not only for the functions themselves but for their derivations, as well, but we do not need this fact. We mention it because it furnishes  interesting information.

\end{document}